\crefname{problem}{Problem}{Problems}
\crefname{observation}{Observation}{Observations}
\crefname{problem}{problem}{problems}
\crefname{observation}{observation}{observations}
\Crefname{problem}{Problem}{Problems}
\Crefname{observation}{Observation}{Observations}
\tikzset{
	chosen/.style={
		rectangle,
	}
}
\newcommand{\ie}{that is, }
\newcommand{\eg}{for example\ }
\newcommand{\crefpart}[2]{\hyperref[#2]{\cref*{#1}~\labelcref*{#2}}}  
\newcommand{\neighbour}{neighbour}
\newcommand{\labell}{labell}
\newcommand{\dualiz}{dualis}
\newcommand{\myvec}[1]{\boldsymbol{#1}}  
\newcommand{\ZZ}{\mathbb{Z}}  
\newcommand{\sumV}{\sum_{v\in V(G)}}  
\DeclareMathOperator{\nb}{N}
\newcommand{\N}[1]{\nb(#1)}  
\newcommand{\Nc}[1]{\nb[#1]}  
\newcommand{\Ng}[2]{\nb_{#1}(#2)}
\newcommand{\Ncg}[2]{\nb_{#1}[#2]}
\newcommand{\subtreerooted}[2]{#1#2}  
\newcommand{\Tv}{\subtreerooted{T}{v}}
\newcommand{\pn}{private \neighbour}
\newcommand{\pnabbr}{}
\newcommand{\RDF}{Roman dominating function}
\newcommand{\RDFabbr}{}
\newcommand{\RDN}{\gamma_R}
\newcommand{\Rdn}{Roman domination number}
\DeclareMathOperator{\limited}{L}
\newcommand{\DRDN}{\limited_2}  
\newcommand{\DRdname}{\(2\)-limited packing}  
\newcommand{\DRdproblem}{\DRdname{} problem}
\newcommand{\DRdshort}{\DRdproblem}  
\newcommand{\DRdn}{\DRdname{} number}
\newcommand{\DRdset}{\DRdname{}}
\newcommand{\DRDSet}{B}
\DeclareMathOperator{\DRDP}{2LP}  
\DeclareMathOperator{\RDP}{RD}  
\DeclareMathOperator{\IP}{IP}
\DeclareMathOperator{\LP}{LP}
\NewDocumentCommand{\instring}{mmmm}
{
	\oleks_instring:nnnn { #1 } { #2 } { #3 } { #4 }
}
\def\setdelimiter{\colon\,}
\def\replacecolonbydelimiter#1:#2\relax{#1\setdelimiter#2}
\def\setcontents#1{\replacecolonbydelimiter#1\relax}
\NewDocumentCommand{\Set}{m}{
	\instring{:}{#1}{
		\ensuremath{\left\{\setcontents{#1}\right\}}%
	}{
		\ensuremath{\left\{#1\right\}}%
	}%
}
\newcommand{\abs}[1]{\ensuremath{\lvert#1\rvert}}
\newcommand{\ipforall}{\forall}
\journalname{Journal of Combinatorial Optimization}
\begin{document}
	
	\title{A dual view of Roman domination: The \DRdproblem
	}
	
	
	\author{
		Oliver Bachtler~\orcidlink{0000-0001-7942-0750} \and Sven O.~Krumke~\orcidlink{0000-0002-8726-9963} \and Helena Weiß~\orcidlink{0009-0003-2825-6665}
	}
	
	\authorrunning{
		O.~Bachtler, S.~O.~Krumke, and H.~Weiß
	} 

	\institute{O.\ Bachtler 
		\and
		S.\ O.\ Krumke 
		\and
		H.\ Weiß \at
		RPTU University Kaiserslautern-Landau, \\Department of Mathematics, Kaiserslautern, Germany \\
		\and
		H.\ Weiß \at
		\email{helena.weiss@math.rptu.de}           
	}
	
	\date{Received: date / Accepted: date}

	\maketitle
	
	\begin{abstract}
		We consider the \DRdproblem{}: 
		for a graph \(G=(V,E)\) one seeks to find a maximum cardinality subset \(\DRDSet \subseteq V\), such that, for all \(v\in V\), the closed \neighbour hood of~\(v\) contains at most two vertices in~\(\DRDSet\). 
		We compare this packing problem to the well-known Roman domination problem by pointing out some similarities and differences in the behaviour of the optimal solutions of both problems and show that these two problems are weakly dual.
		
		We show that for trees, the two problems are strongly dual, letting us solve the Roman domination problem by computing an optimal solution to the \DRdproblem.
		
	      \keywords{Roman Domination \and Limited Packing\and Weak Duality \and Strong Duality} 
	      \subclass{05C69 \and 05C05 \and 05C85} 
	\end{abstract}
	
	\section{Introduction}
	The Roman domination problem (RDP) has widely been studied since the early 2000s.
	Based on historical events, it developed to a mathematical problem \citep{Stewart1999, ReVelle2000, Cockayne2004}.
	\cite{Cockayne2004} show numerous properties for Roman dominating functions and first bounds on the Roman domination number \(\RDN\), \eg depending on the maximum degree \(\Delta\) of a graph:
	\(\RDN \geq \frac{2n}{\Delta + 1}\).
	They also compare it to the domination number $\gamma$ and observe that \(\gamma \leq \RDN \leq 2\gamma\).
	An upper bound on the Roman domination number is given by \cite{Chambers2009} with \(\RDN \leq \frac{4n}{5}\).
	They also found an upper bound depending on the maximum degree \(\RDN \leq n - \Delta + 1\).
	
	In \cite{Dreyer2000} it is shown that the decision version of the RDP is NP-complete in general and it is mentioned that there are proofs that it is NP-complete, even when restricted to chordal, bipartite, split, or planar graphs.
	In his thesis, \cite{Dreyer2000} also gives a linear-time algorithm to solve the RDP on trees and in their paper, \cite{PengTsai2007} show that the problem is solvable in linear time on graphs of bounded tree-width.
	Furthermore, \cite{Liedloff2008} prove that there are linear-time algorithms when the input is restricted to interval graphs or cographs.
	Moreover, a lot of variants on the RDP have been investigated, see \citet{Chellali2020survey,Chellali2021,Chellali2020} for an overview.
	
	In their paper \cite{Gallant2010} introduce \(k\)-limited packings in graphs and show some bounds for them. 
	For a graph~\(G\) and an integer~\(k\), a \emph{\(k\)-limited packing} is a subset of vertices \(B\subseteq V(G)\) such that \(\Nc{v} \cap B \leq k\) for all \(v\in V(G)\).
	The \(k\)-limited packing number is the maximum cardinality of a \(k\)-limited packing.
	Comparing the \(k\)-limited packing number to \(k\)-tuple dominating sets, they show that \(\limited_{r-k}(G) + \gamma_{\times(k+1)}(G) = \abs{V(G)}\).
	Also they get that for the case \(k=2\) it holds \(\DRDN(G) \leq \frac{4}{5}\abs{V(G)}\) which also follows by \cite{Chambers2009} and weak duality to the RDP which we show in this paper.
	\cite{Gallant2010} also characterise trees whose \(k\)-limited packing number equals twice the domination number.
	Together with our result of strong duality on trees, it follows that this characterisation equals the one for Roman trees in \cite{Henning2002}.
	\cite{Dobson2011} then showed that the problem of finding \(k\)-limited packings is NP-complete while the work of \cite{Telle1993} implies that the problem is solvable in linear time on graphs of bounded tree-width.
	Thus, this problem is very similar to the RDP in a computational sense.
	%
	
	Integer programming together with linear programming duality can be used to derive combinatorial inequalities \citep{Nemhauser1988}.
	A classical example is to relax an IP formulation of the maximum matching problem, whose dual, after reinstating the integrality constraints, is the vertex cover problem.
	This shows that a minimum vertex cover contains at least as many vertices as a maximum matching has edges.
	Another example is regarded by \cite{Bachtler2024}, who show weak duality of the almost disjoint path problem and the separating by forbidden pairs problem.
	Another classical example, where even strong duality holds, is the integer maximum flow and minimum cut problem \citep{Ahuja1993}.
	
	In this paper, we \dualiz e the Roman domination problem and by this, show weak duality to the \(2\)-limited packing problem.
	It turns out there are some similarities between the RDP and the \DRdshort{} like the complexity in general and on trees.
	Our main result is \cref{strongdualitytrees}, which states that on trees the solutions of the RDP and the \DRdshort{} always coincide which implies that the corresponding linear relaxations of the integer linear programs always have an integral solution for trees.
	We also point out differences between these problems by checking whether known results for the Roman domination problem also hold for the \DRdproblem.
	In particular, we show that the duality gap for these two problems is unbounded, even in a multiplicative sense.
	
	In the next section, we give an overview of important notation and state some properties corresponding to the RDP.
	In \cref{sec:initial-properties}, we show weak duality of the \DRdproblem{} to RDP and show some basic properties that are analogous to known results for the RDP.
	\Cref{strongduality} contains our main result, the strong duality of our two problems on trees, before in the last section we conclude this work with some further ideas.
	\section{Preliminaries}
	\label{sec:prelims}
	
	All graphs considered here are finite, simple, and undirected.
	We denote the vertex and edge set of a graph~\(G\) by \(V(G)\) and \(E(G)\). 
	By \(\Ng{G}{v}\) we denote the (open) \neighbour hood of a vertex~\(v\) in~\(G\), \ie the set of vertices adjacent to~\(v\).
	By \(\Ncg{G}{v}\) we denote the closed \neighbour hood \(\Ncg{G}{v} \coloneqq \Ng{G}{v} \cup \Set{v}\).
	If the graph~\(G\) is clear from the context, we also write \(\N{v}\) and \(\Nc{v}\) for these sets.
	The degree~\(\deg(v)\) of vertex~\(v\) is the size of its \neighbour hood.
	The maximum degree of a graph \(G\) is denoted by~\(\Delta(G)\).
	For a subset \(W\subseteq V\) of vertices, \(G[W]\) is the subgraph induced by \(W\).
	A tree with a vertex designated as root is called a rooted tree.
	For a rooted tree~\((T,r)\) rooted at~\(r\) we denote the subtree rooted at~\(v\) by \(\Tv\).
	In particular, \(T=\subtreerooted{T}{r}\).
	We denote the path graph, the cycle graph, the complete graph, the empty graph on~\(n\) vertices, and the complete \(n\)-partite graph on \(m_1 + \dots + m_n\) vertices by \(P_n\), \(C_n\), \(K_n\), \(\bar{K}_n\), and \(K_{m_1,\dots,m_n}\) respectively.
	A \(K_{1,n}\) is called star graph.
	
	A \emph{\DRdset{}} in~\(G\) is a subset \(\DRDSet\subseteq V(G)\) with \(\abs{\Nc{v} \cap \DRDSet} \leq 2\). By \(\DRDN(G)\), we denote the \DRdn{} which is the size of a maximum cardinality \DRdset{}.
	
	A \emph{Roman dominating function}\RDFabbr{} for a graph~\(G\) is a map \(f\colon V(G)\to \Set{0,1,2}\) such that for each \(v\in V(G)\) with \(f(v) = 0\) there is a \neighbour{} \(u\) of~\(v\) with \(f(u) = 2\).
	We call \(\sumV f(v)\) the \emph{weight} of~\(f\).
	For a graph~\(G\) the minimum weight of a Roman dominating function is denoted by \(\RDN(G)\) and called \emph{Roman domination number} of~$G$. 
	We call a Roman dominating function with weight \(\RDN(G)\) a \(\RDN\)-function.
	In the Roman domination problem one seeks to compute~\(\RDN(G)\) for a graph~\(G\).
	
	A vertex~\(u\) is a \emph{private \neighbour} \pnabbr{}of~\(v\) with respect to a subset \(V_2\subseteq V(G)\) of vertices if \(u\in\Nc{v}\) but \(u\notin \Nc{v'}\) for all $v'\in V_2\setminus\Set{v}$. 
	A \pn{} of~\(v\) is \emph{external} if it is a vertex of \(V(G)\setminus V_2\).
	
	In \cite{Cockayne2004} a lot of properties for \RDF s were proven.
	For later reference we state some of them here.
	\begin{proposition}[Prop.~3 in \cite{Cockayne2004}]\label[proposition]{RDFproperties}
		For a graph~\(G\), let \(f\) be some \(\RDN\)-function and \(V_i = \Set{v\in V(G): f(v) = i}\) the vertices with value~\(i\).
		\begin{enumerate}[label=(\alph*)]
			\item \(\Delta(G[V_1]) \leq 1\).\label{deltaofones}
			\item There is no edge between a vertex in \(V_1\) and one in \(V_2\).\label{noonetwo}
			\item Each vertex in \(V_0\) is adjacent to at most two in \(V_1\).\label{fewonesatzero}
			\item Each vertex in \(V_2\) has at least two \pn s with respect to \(V_2\).\label{pnsoftwos}
			\item If~\(v\) is isolated in \(G[V_2]\) and has precisely one external \pn~\(w\) wrt.~\(V_2\), then \(\N{w}\cap V_1 = \emptyset\).\label{nooneforprivatezero}
		\end{enumerate}
	\end{proposition}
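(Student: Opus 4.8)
The plan is to derive every part from the minimality of a $\RDN$-function $f$ by a uniform \emph{exchange argument}: assuming a property fails, I would modify $f$ into another Roman dominating function of strictly smaller weight, contradicting optimality. Two elementary facts drive all the swaps. First, raising the value of any vertex never violates the defining condition, so I may increase $f$ anywhere without affecting validity. Second, lowering $f$ on a vertex $x$ to $1$ keeps $x$ itself defended, and lowering it to $0$ keeps $x$ dominated provided $x$ has a neighbour in $V_2$; in either case the only further vertices that can lose their domination are the external \pn s of $x$, \ie those $V_0$-neighbours of $x$ whose only value-$2$ neighbour was $x$.

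The first three parts are single swaps. For part~(a), if $v \in V_1$ had two neighbours $u, w \in V_1$, I would set $f(v) = 2$ and $f(u) = f(w) = 0$, changing the weight by $+1 - 1 - 1 = -1$ while $u$ and $w$ become dominated by $v$. For part~(b), if $v \in V_1$ were adjacent to $u \in V_2$, setting $f(v) = 0$ saves one unit and leaves $v$ dominated by $u$. For part~(c), if $v \in V_0$ had three neighbours $u_1, u_2, u_3 \in V_1$, I would set $f(v) = 2$ and $f(u_1) = f(u_2) = f(u_3) = 0$, saving $3 - 2 = 1$ while the $u_i$ are dominated by $v$. In each case the result is a valid \RDF{} of smaller weight, the desired contradiction.

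Part~(d) needs a short case distinction, preceded by two remarks: a neighbour $u$ of $v \in V_2$ that itself lies in $V_2$ is never a \pn{} of $v$, since $u \in \Nc{u}$ and $u \in V_2 \setminus \Set{v}$; and by part~(b) no neighbour of $v$ lies in $V_1$. Hence, apart from $v$ itself, every \pn{} of $v$ is an external one in $V_0$. If $v$ has no neighbour in $V_2$, then $v$ is its own \pn, and were there no external \pn{} I could lower $f(v)$ to $1$ without harm, so $v$ has at least one external \pn{} and thus two in total. If $v$ does have a neighbour in $V_2$, then $v$ is not its own \pn; assuming $v$ had at most one external \pn{} $w$, I would set $f(v) = 0$ (and, if such a $w$ exists, also $f(w) = 1$), saving a unit: $v$ is then dominated through its $V_2$-neighbour, $w$ defends itself, and every remaining $V_0$-neighbour of $v$ retains a different value-$2$ dominator. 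Either way $v$ has at least two \pn s.

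For part~(e) I would again argue by contradiction. Since $v$ is isolated in $G[V_2]$ it is its own \pn, and by part~(b) its unique external \pn{} $w$ lies in $V_0$. Supposing $w$ had a neighbour $u \in V_1$, I would reassign $f(v) = 0$, $f(w) = 2$, and $f(u) = 0$, a net change of $-2 + 2 - 1 = -1$: now both $v$ and $u$ are dominated by $w$, while every $V_0$-neighbour of $v$ other than $w$ was non-private and so keeps an untouched value-$2$ dominator, so the new map is a valid \RDF{} of smaller weight. This contradiction gives $\N{w} \cap V_1 = \emptyset$. The routine content lies in parts~(a)--(c); I expect the real difficulty to be the \pn{} bookkeeping in (d) and (e)---in particular the two-vertex swap used when there is a single external \pn{} in (d), and the three-vertex swap in (e)---both of which hinge on correctly identifying which third parties could lose their only value-$2$ neighbour.
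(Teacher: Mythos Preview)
The paper does not contain its own proof of this proposition; it is quoted verbatim as Proposition~3 from \cite{Cockayne2004} and used as a black box in \cref{strongduality}. Your exchange arguments are correct and are essentially the standard ones from the original source: each part is obtained by assuming the conclusion fails and exhibiting a Roman dominating function of strictly smaller weight, exactly as you describe. The bookkeeping in parts~(d) and~(e)---tracking which $V_0$-vertices could lose their only value-$2$ neighbour after a swap---is handled correctly, including the observation (via part~(b)) that external private neighbours of a $V_2$-vertex necessarily lie in~$V_0$.
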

	
	Furthermore, we state some basic properties of the Roman domination problem here, which we check for \DRdproblem{} in the next section, to see whether they are also applicable there.
	\begin{observation}[Observation~7 in \cite{Rad2012}]\label[observation]{rdn-edge-removal}
		When deleting an edge, \(\RDN\) does not decrease.
	\end{observation}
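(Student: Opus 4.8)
The plan is to argue by monotonicity: deleting an edge yields a sparser graph, which is only harder to Roman-dominate, so the Roman domination number cannot drop. Concretely, write \(G' = G - e\) for the graph obtained from \(G\) by deleting the edge~\(e\), and let \(f\) be a \(\RDN\)-function for~\(G'\). I would show that this very same map~\(f\) is already a Roman dominating function for~\(G\). This immediately yields \(\RDN(G) \leq \sumV f(v) = \RDN(G')\), \ie the number does not decrease when the edge is deleted.

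The only thing to verify is the defining condition of a Roman dominating function. Since \(V(G') = V(G)\) and \(E(G') \subseteq E(G)\), no vertex loses a neighbour when passing from~\(G'\) to~\(G\); that is, \(\Ng{G'}{v} \subseteq \Ng{G}{v}\) for every vertex~\(v\). Moreover \(f\) assigns exactly the same values in both graphs. Hence, for any vertex~\(v\) with \(f(v) = 0\), the neighbour~\(u\) with \(f(u) = 2\) guaranteed by the validity of~\(f\) on~\(G'\) still satisfies \(u \in \Ng{G}{v}\). Thus the Roman domination condition holds for~\(f\) on~\(G\) as well, as claimed.

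There is no genuine obstacle here; the single point deserving care is the direction of the inequality. Deleting an edge moves from~\(G\) to the sparser~\(G'\), and because every Roman dominating function of a spanning subgraph remains one in the supergraph, the set of admissible functions can only shrink as edges are removed, so the minimum attainable weight can only grow. In fact the same argument proves the slightly stronger monotonicity statement \(\RDN(H) \leq \RDN(G')\) whenever \(G'\) is a spanning subgraph of~\(H\), of which the claimed observation is the special case of removing a single edge.
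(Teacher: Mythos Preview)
Your argument is correct. Note, however, that the paper does not actually give its own proof of this observation: it is stated in the preliminaries as a known result, with a citation to \cite{Rad2012}, and no proof is supplied. (The paper \emph{does} prove the analogous statement for \(\DRDN\) a bit later, via the one-line observation that closed \neighbour hoods after edge deletion are subsets of closed \neighbour hoods before deletion; your monotonicity argument is the natural counterpart of that for \(\RDN\).) So there is nothing to compare against beyond noting that your proof is the standard one and is fine as written.
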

	
	\begin{observation}[Proposition~7 and~8 in \cite{Cockayne2004}]\label[observation]{rdn-special-graphs}
		\begin{enumerate}[label=(\alph*)]
			\item \(\RDN(P_{n}) = \RDN(C_{n}) = \left\lceil\frac{2}{3}n\right\rceil\).
			\item Let \(m_1\leq m_2 \leq \dots \leq m_n\), then \(\RDN(K_{m_1,\dots,m_n}) = 2\) if \(m_1=1\), \(\RDN(K_{m_1,\dots,m_n}) = 3\) if \(m_1=2\), and \(\RDN(K_{m_1,\dots,m_n}) = 4\) if \(m_1\geq 3\).
		\end{enumerate}
	\end{observation}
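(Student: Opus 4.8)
The plan is, for each part, to exhibit a \RDF{} realising the claimed value (the upper bound) and then to argue that no lighter function exists (the lower bound). These two halves have quite different flavours: the upper bounds are explicit constructions, while the lower bounds need counting.

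For part (a), I would obtain the upper bound by placing the value \(2\) periodically. Labelling the vertices of \(P_n\) (or \(C_n\)) consecutively, assign \(f(v)=2\) to every third vertex, so that each such vertex together with its (at most two) \neighbour s covers three consecutive vertices. This settles \(n\equiv 0\pmod 3\) with weight \(2n/3\); for \(n\equiv 1,2\pmod 3\) I would cover the one or two leftover vertices with a single extra value, which in each case gives weight exactly \(\lceil 2n/3\rceil\). The lower bound is the heart of the argument. Given any \RDF{} \(f\) with value classes \(V_0,V_1,V_2\), every vertex of \(V_0\) has a \neighbour{} in \(V_2\); since the maximum degree of a path or cycle is at most \(2\), double counting the edges between \(V_0\) and \(V_2\) yields \(\abs{V_0}\le 2\abs{V_2}\). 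Hence
\[
n=\abs{V_0}+\abs{V_1}+\abs{V_2}\le \abs{V_1}+3\abs{V_2},
\]
so the weight satisfies \(\abs{V_1}+2\abs{V_2}\ge n-\abs{V_2}\). Combining this with the trivial bound \(\abs{V_1}+2\abs{V_2}\ge 2\abs{V_2}\) and distinguishing the cases \(\abs{V_2}\le n/3\) and \(\abs{V_2}> n/3\) shows the weight is at least \(2n/3\); as it is an integer, it is at least \(\lceil 2n/3\rceil\).

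For part (b), the key structural fact is that a vertex of \(K_{m_1,\dots,m_n}\) is adjacent to every other vertex precisely when its part is a singleton. If \(m_1=1\), such a universal vertex \(v\) exists, and setting \(f(v)=2\) with \(f\equiv 0\) elsewhere is a \RDF{} of weight \(2\); since no graph with an edge admits a \RDF{} of weight \(1\), this is optimal. If \(m_1=2\), no universal vertex exists, so weight \(2\) is impossible, while taking a part \(\Set{a,b}\) of size two and setting \(f(a)=2\), \(f(b)=1\) leaves every remaining vertex dominated by \(a\), giving weight \(3\). If \(m_1\ge 3\), choosing vertices \(a,b\) from two different parts and setting \(f(a)=f(b)=2\) dominates all of \(V(G)\), since each vertex lies outside at least one of the two parts, so weight \(4\) suffices. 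To see that weight \(3\) is now impossible I would rule out its only two shapes: three vertices of value \(1\) fails because the remaining value-\(0\) vertices have no value-\(2\) \neighbour, and a single value-\(2\) vertex \(v\) together with one value-\(1\) vertex forces every non-\neighbour{} of \(v\)---\ie every other vertex of \(v\)'s part---to receive a positive value, yet only one further vertex does, so \(v\)'s part could contain at most two vertices, contradicting \(m_1\ge 3\).

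The main obstacle is the lower bound in part (a): the upper-bound constructions and the small case analysis in part (b) are routine, whereas the path/cycle lower bound hinges on finding the right counting inequality \(\abs{V_0}\le 2\abs{V_2}\) and then optimising over \(\abs{V_2}\) to land exactly on the ceiling rather than on a weaker bound.
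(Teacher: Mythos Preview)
The paper does not actually prove this observation: it is quoted verbatim from \cite{Cockayne2004} as background in the preliminaries section, with no argument supplied. There is therefore no in-paper proof to compare against.

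Your argument is correct and is essentially the standard one. For part~(a), the double-counting bound \(\abs{V_0}\le \Delta\abs{V_2}\le 2\abs{V_2}\) combined with \(W\ge\max(n-\abs{V_2},\,2\abs{V_2})\) and integrality is exactly how the lower bound is obtained in the original source; your periodic construction for the upper bound is likewise standard. For part~(b), your case split on \(m_1\) and, within the \(m_1\ge 3\) case, on the two possible shapes of a weight-\(3\) function is the natural route. Two cosmetic points: in the \(m_1=1\) case the relevant fact is simply that any graph on at least two vertices has \(\RDN\ge 2\) (the presence of an edge is incidental), and in the \(m_1\ge 3\) case you should remark that weight \(\le 2\) is already excluded by the same reasoning used for \(m_1=2\), since no part is a singleton. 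Neither affects validity.
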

	
	\begin{observation}[Observation~1 and~2 in \cite{Sumenjak2012}]\label[observation]{rdn-23}
		\begin{enumerate}[label=(\alph*)]
			\item If \(G\) is not isomorphic to~\(\bar{K}_2\), then \(\RDN(G) = 2\) if and only if \(\Delta(G) = \abs{V(G)}-1\).\label{rdn-2}
			\item If \(G\) is a connected graph, then \(\RDN(G) = 3\) if and only if \(\Delta(G) = \abs{V(G)}-2\).\label{rdn-3}
		\end{enumerate}
	\end{observation}
	
	
	\section{Derivation and basic properties}
	\label{sec:initial-properties}
	
	In this \lcnamecref{sec:initial-properties} we show the \DRdproblem{} to be weakly dual to the Roman domination problem.
	We also determine some properties of the \DRdproblem{} and compare them to the Roman domination problem.
	
	We start by recalling the \DRdproblem{} in its decision variant.
	
	\begin{problem}[\DRdproblem{} (decision variant)]\label{DRDP}
		For a graph~\(G\) and an integer~\(b\), is there a \DRdset{} \(\DRDSet\) such that \(\abs{\DRDSet} \geq b\)?
	\end{problem}
	In the following, we call vertices in a \DRdset{} \emph{chosen} or \emph{selected}.
	
	\paragraph{Weak duality.}
	To see that the \DRdproblem{} is, indeed, weakly dual to the Roman domination problem, we use the integer programming formulation found in \cite{PoureidiFathali2023}, determine the dual of its linear relaxation, and see that the integer program corresponding to this dual is the \DRdproblem.
	In particular, the size of any \DRdname{} of a graph $G$ is a lower bound on its Roman domination number.
	
	The IP in question is called RDP-ILP-2 in \cite{PoureidiFathali2023}:
	\begin{mini}
		{\myvec{x},\myvec{y}}{\sumV x_v + 2\sumV y_v}{}{}
		\addConstraint{x_v + \sum_{u\in\Nc{v}} y_u}{\geq 1}{\quad \ipforall v\in V(G)}
		\addConstraint{x_v,y_v}{\in \Set{0,1}}{\quad \ipforall v\in V(G)}.
	\end{mini}
	Here, \(x_v = 1\) represents \(f(v) = 1\) and \(y_v=1\) implies \(f(v) = 2\).
	If both are set to \(0\), then \(f(v) = 0\) as well.
	Thus, the objective coefficients are \(1\) for the \(x\)-variables and \(2\) for the \(y\)-variables.
	The constraints ensure that, for each vertex \(v\), either \(v\) is assigned a \(1\) or some neighbour in its closed \neighbour hood receives a \(2\).
	In particular, \(x_v=0\) if \(y_v=1\). 
	
	We now transition to the LP-relaxation by replacing $x_v,y_v\in\Set{0,1}$ by $x_v,y_v\geq 0$.
	Note that no optimal solutions will set the variables to values greater than one, letting us drop the $x_v,y_v\leq 1$ constraints.
	When we \dualiz e the resulting linear program, we get
	\begin{maxi}{\myvec{a}}{\sumV a_v}{\label[minmax]{RDdual}}{}
		\addConstraint{\sum_{u\in\Nc{v}} a_u}{\leq 2}{\quad \ipforall v\in V(G)}
		\addConstraint{a_v}{\leq 1}{\quad \ipforall v\in V(G)}
		\addConstraint{a_v}{\geq 0}{\quad \ipforall v\in V(G)}.
	\end{maxi}
	
	The integer version of this can be interpreted as choosing a maximum number of vertices such that at most two vertices are chosen in each closed \neighbour hood, which is exactly the \DRdproblem.
	Thus, this problem is, indeed, weakly dual to the Roman domination problem.
	
	Hence,
	\begin{theorem}
		Let~\(G\) be a graph. 
		Then \(\RDN(G)\geq \DRDN(G)\).
	\end{theorem}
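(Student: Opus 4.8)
The plan is to establish the inequality by a direct weak-duality computation between a $\RDN$-function and a maximum \DRdset{}, which makes the bound self-contained and avoids invoking strong LP duality. The point is that all the structural work has already been done in the derivation above: we know that RDP-ILP-2 faithfully models the Roman domination problem, and that the integer version of \labelcref{RDdual} is exactly the \DRdproblem{}. So I would fix a \(\RDN\)-function \(f\) of \(G\) together with the associated \(0/1\)-vectors \(x,y\) given by \(x_v = 1\) iff \(f(v)=1\) and \(y_v = 1\) iff \(f(v)=2\); these form a feasible integer point of RDP-ILP-2 of objective value \(\sumV x_v + 2\sumV y_v = \RDN(G)\). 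Likewise I would fix a maximum \DrdsetConst, i.e.\ a set \(\DRDSet\) with \(\abs{\DRDSet} = \DRDN(G)\), and let \(a\) be its indicator vector, which is feasible for the integer version of \labelcref{RDdual}, meaning \(0\le a_v\le 1\) and \(\sum_{u\in\Nc{v}} a_u \le 2\) for every \(v\).

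The core step is then the standard inner-product estimate. Starting from \(\DRDN(G)=\sumV a_v\) and using primal feasibility \(x_v + \sum_{u\in\Nc{v}} y_u \ge 1\) together with \(a_v\ge 0\), each term \(a_v\) is bounded by \(a_v\bigl(x_v + \sum_{u\in\Nc{v}} y_u\bigr)\). Summing and interchanging the order of summation in the double sum (legitimate because, in a simple undirected graph, \(u\in\Nc{v}\) is equivalent to \(v\in\Nc{u}\)) rewrites the second contribution as \(\sum_{u\in V(G)} y_u \sum_{v\in\Nc{u}} a_v\). Applying the packing constraint \(\sum_{v\in\Nc{u}} a_v\le 2\) to the inner sum and \(a_v\le 1\) to the first contribution yields
\[
\DRDN(G)=\sumV a_v \le \sumV a_v x_v + \sum_{u\in V(G)} y_u \sum_{v\in\Nc{u}} a_v \le \sumV x_v + 2\sum_{u\in V(G)} y_u = \RDN(G),
\]
which is exactly the claimed bound.

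I do not expect a genuine obstacle here, since the substance lies in the already-completed dualization; the only thing demanding care is bookkeeping the inequality directions and the summation swap. As an alternative I would note the purely LP-theoretic route: for the minimization RDP-ILP-2 one has \(\RDN(G)\ge\LP\text{-optimum}\), this equals the optimum of the dual \labelcref{RDdual} by LP strong duality, and that in turn dominates the optimum of its integral restriction, which is \(\DRDN(G)\); here the subtlety is merely to confirm that dropping the \(x_v,y_v\le 1\) bounds in the relaxation does not change the optimum, as argued in the derivation. Either way, the theorem follows.
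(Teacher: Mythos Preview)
Your proposal is correct, and both routes you describe are sound; the second route is exactly what the paper does (it simply writes ``Hence,'' after the dualisation, implicitly invoking the chain $\RDN(G)\ge z_{\RDP}^{\LP}\ge z_{\DRDP}^{\LP}\ge \DRDN(G)$). Your first route just unpacks the standard weak-duality inner-product argument for this specific primal--dual pair, which is a minor elaboration rather than a different approach.
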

	
	\paragraph{Basic properties of the \DRdproblem.}
	The \DRdproblem{} is a special case of a variant of the Multidimensional Knapsack Problem, called Linear Knapsack Problem (LKP) in \cite{Borgmann2023}.
	The LKP is formulated as follows:
	\begin{maxi*}{\myvec{x}}{\sumV a_v x_v}{}{}
		\addConstraint{\sum_{v\in F} x_v\leq \kappa_F}{\quad \ipforall F\in\mathcal{F}}
		\addConstraint{x_v\in\Set{0,1}}{\quad \ipforall v\in V(G)}
	\end{maxi*}
	for a graph \(G\), vertex weights \(a_v\in\ZZ_{\geq 0}\), and~\(\mathcal{F}\) a set of connected subgraphs of~\(G\) given with capacities \(\kappa_F\). 
	Setting the vertex weights to~\(1\) for all vertices of~\(G\) and \(\mathcal{F}\) to be the set of all closed \neighbour hoods (so each graph in \(\mathcal{F}\) is a star graph) with capacity~\(2\) for each graph in~\(\mathcal{F}\), we get the \DRdproblem{}.
	For general~\(\mathcal{F}\) \cite{Borgmann2023} showed that LKP is polynomial time solvable on path graphs but NP-hard on trees.
	
	There are some similarities between the \DRdproblem{} and the Roman domination problem. 
	For example, the analogue to \cref{rdn-edge-removal} is true.
	\begin{observation}
		When deleting an edge, \(\DRDN\) does not decrease.
	\end{observation}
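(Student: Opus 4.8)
The plan is to prove the stronger assertion that deleting an edge cannot destroy any \DRdset{}: every \DRdset{} of $G$ remains a \DRdset{} of the graph $G' = G - e$ obtained by removing an edge $e = uw$. Granting this, a maximum \DRdset{} of $G$, which has size $\DRDN(G)$, is in particular a \DRdset{} of $G'$ and therefore witnesses $\DRDN(G') \geq \DRDN(G)$, which is the claim.

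The key observation is that deleting an edge only shrinks closed \neighbour hoods. Concretely, removing $e = uw$ deletes $w$ from $\Ncg{G}{u}$ and $u$ from $\Ncg{G}{w}$, while leaving $\Ncg{G}{v}$ unchanged for every other vertex $v$. Hence $\Ncg{G'}{v} \subseteq \Ncg{G}{v}$ for all $v \in V(G)$. First I would record this containment, checking the three cases $v = u$, $v = w$, and $v \notin \Set{u,w}$.

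From the containment the conclusion is immediate: for any fixed set $A \subseteq V(G)$ and any vertex $v$ we have $\abs{\Ncg{G'}{v} \cap A} \leq \abs{\Ncg{G}{v} \cap A}$. So if $A$ satisfies the packing constraint $\abs{\Ncg{G}{v} \cap A} \leq 2$ for every $v$ in $G$, then it also satisfies $\abs{\Ncg{G'}{v} \cap A} \leq 2$ for every $v$ in $G'$, \ie $A$ is a \DRdset{} of $G'$. Applying this to a maximum \DRdset{} of $G$ finishes the argument.

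I expect essentially no obstacle here: the statement is pure monotonicity of the feasible region under edge deletion, since the constraints are indexed by closed \neighbour hoods and these only lose elements. The only points to be careful about are the bookkeeping of which \neighbour hoods change and the fact that the argument is one-directional, so one should not claim equality (an optimal packing of $G'$ need not be feasible for $G$).
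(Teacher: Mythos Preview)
Your proposal is correct and follows exactly the same idea as the paper: closed \neighbour hoods in $G-e$ are contained in the corresponding closed \neighbour hoods of $G$, so any \DRdset{} of $G$ remains feasible after edge deletion. The paper compresses this into a single sentence, but the content is identical.
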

	\begin{proof}
		Closed \neighbour hoods in the graph after deletion were also part of closed \neighbour hoods before deletion.\qed
	\end{proof}
	
	Next, let us take a look at some specific graph classes.
	\begin{observation}[Lemma~3 in \cite{Gallant2010}]\label[observation]{tlp-special-graphs}
		\begin{enumerate}[label=(\alph*)]
			\item \(\DRDN(P_n) = \left\lceil \frac{2}{3}n \right\rceil\).
			\item \(\DRDN(C_n) = \left\lfloor \frac{2}{3}n \right\rfloor\).
			\item \(\DRDN(K_{m,n}) = 2\) for \(m,n \geq 1\)
		\end{enumerate}
	\end{observation}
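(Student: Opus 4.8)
The plan is to treat the two parts independently, establishing in each a matching upper bound and an explicit construction. For part~(a) the key device is a double count of $\sumV\abs{\Nc{v}\cap\DRDSet}$. The packing condition bounds each summand by two, so this total is at most $2n$. Reversing the order of summation, a selected vertex $a$ is counted once for every $v$ with $a\in\Nc{v}$, that is, once for every $v\in\Nc{a}$, so the total also equals $\sum_{a\in\DRDSet}\abs{\Nc{a}}$. In a cycle every closed neighbourhood has size three, giving $3\abs{\DRDSet}\le 2n$ and hence $\abs{\DRDSet}\le\lfloor\frac23 n\rfloor$ by integrality. In a path the two endpoints have closed neighbourhoods of size two, so the total loses one for each chosen endpoint; since there are at most two endpoints, $\sum_{a\in\DRDSet}\abs{\Nc{a}}\ge 3\abs{\DRDSet}-2$, which yields $3\abs{\DRDSet}-2\le 2n$ and $\abs{\DRDSet}\le\lceil\frac23 n\rceil$.

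For the matching constructions I would cut the graph into consecutive blocks of three vertices and select the first two of each, following the pattern ``select, select, skip''. One checks directly that no closed neighbourhood then meets the selection in three vertices, so this is a valid packing, and on a cycle whose length is divisible by three it attains $\frac23 n$. For the other residues, and for all paths, I would treat the short leftover block at the end separately: on a path its vertices may all be chosen, supplying the extra vertex required for the ceiling, whereas on a cycle the wrap-around edge forbids choosing the final leftover vertex, which is precisely why the cycle value is a floor. Verifying validity and counting the chosen vertices is then a short case distinction on $n\bmod 3$, and this bookkeeping---matching the floor and ceiling exactly while ensuring the wrap-around block never forces three chosen vertices into one closed neighbourhood---is the only place demanding care.

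For part~(b) the lower bound is immediate: any two vertices form a packing, since $\abs{\Nc{v}\cap\DRDSet}\le\abs{\DRDSet}=2$ for every $v$, so $\DRDN(K_{m_1,\dots,m_n})\ge 2$ whenever there are at least two parts. For the upper bound I would take any three chosen vertices $a,b,c$ (assuming $n\ge 2$, so the graph has an edge) and distinguish three cases. If all three lie in one part, any vertex of another part has all three in its closed neighbourhood; if two lie in one part and the third in another, the third vertex does; and if they lie in three distinct parts, each of them is adjacent to the other two. In every case some closed neighbourhood contains three chosen vertices, contradicting the packing condition, so $\abs{\DRDSet}\le 2$ and the value is exactly $2$.
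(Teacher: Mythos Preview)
Your proposal is correct and follows essentially the same route as the paper: the same double count $\sumV\abs{\Nc{v}\cap\DRDSet}\le 2n$ with the observation that each selected vertex contributes $\abs{\Nc{a}}$ (three in a cycle, three except for up to two endpoints in a path), the same ``select, select, skip'' block construction with a short leftover block handled separately, and the same three-case analysis for complete multipartite graphs.
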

	%
	
	The last result is actually true for any complete \(n\)-partite graph \(K_{m_1,\dots,m_n}\):
	selecting three vertices in one part of the partition would yield a vertex with three selected \neighbour s, which cannot happen.
	Similarly, selecting two vertices in one part and one in another yields a selected vertex with two selected \neighbour s, which is equally impossible.
	The same would happen if for three different parts one vertex each was selected.
	
	\begin{observation}\label[observation]{drdn-complete-multipartite}
			Let \(m_1\leq m_2 \leq \dots \leq m_n\), then \(\DRDN(K_{m_1,\dots,m_n}) = 2\).
	\end{observation}
	
	When comparing \cref{tlp-special-graphs} to \cref{rdn-special-graphs}, we see that the Roman domination number and the \DRdn{} coincide on paths, stars, and cycles whose length is divisible by three.
	For other cycles \(C_n\), \(\RDN(C_n) = \DRDN(C_n) + 1\).
	For complete bipartite graphs that are not stars, the two numbers differ by one or two as well.
	In fact, the duality gap can get arbitrarily large: 
	regard for example the~\(C_4\) with \(\RDN(C_4) = 3\) and \(\DRDN(C_4) = 2\) and take the \(kC_4\), that is, the disjoint union of \(k\) copies of the \(C_4\).
	Since the \Rdn{} and the \DRdn{} are additive with respect to connected components, we have \(\RDN(kC_4)-\DRDN(kC_4) = k\).
	
	Thus, regarded additively, the duality gap is unbounded.
	This is also true when it is considered multiplicatively and for a connected graph, as we will now see.
	Let~$G_n$, \(n\geq 3\), be the graph that consists of a \(\bar{K}_n\) and a \(K_m\), where $m=\binom{n}{3}$.
	These two graphs are interconnected as follows:
	each vertex in the \(K_m\) corresponds to a triple of vertices in the \(\bar{K}_n\) and is connected to exactly the vertices in this triple by an edge.
	%
	%
	%
	
	All the graphs \(G_n\) have a \DRdn{} of~2.
	This is the case since we can always select two vertices and three are not possible here:
	let \(\DRDSet\) be a set of three vertices.
	By construction there exists at least one vertex \(v\) in the \(K_m\) that contains \(\DRDSet\cap \bar{K}_n\) in its \neighbour hood, and thus \(\DRDSet\) in its closed neighbourhood.
	
	The Roman domination number for these graphs is at least \(\frac{2}{3}n\) however.
	To see this, let \(f\) be a \RDF.
	If \(f(v)\neq 2\) for all \(v\in K_m\), then each of these \(m\) vertices is assigned a~\(1\) by \(f\) or it is adjacent to a vertex~\(w\in\bar{K}_n\) with \(f(w) = 2\).
	But such a vertex in \(\bar{K}_n\) has \(\binom{n-1}{2}\) \neighbour s in the \(K_m\), so we pay at least $\frac{4}{(n-1)(n-2)}$ per vertex, yielding weight at least \(\frac{2}{3}n\) for \(f\).
	
	On the other hand, if \(f(v)=2\) for some \(v\in K_m\), then all the vertices in the \(K_m\) have a \neighbour ing~\(2\).
	Consequently, we only need to deal with the vertices in $\bar{K}_n$.
	Here, no vertex needs to be be assigned a~\(2\), since they only have \neighbour s in \(K_m\).
	Setting \(f(w) = 1\) for a vertex \(w\in\bar{K}_n\) covers exactly that vertex and setting \(f(v) = 2\) for \(v\in K_m\) covers three vertices in \(\bar{K}_n\), so we pay at least \(\frac{2}{3}\) per vertex covered, yielding weight at least \(\frac{2}{3}n\) for \(f\) in this case as well.
	
	We complete this section by seeing whether the analogous statements of \cref{rdn-23} hold for the \DRdproblem{} as well.
	Similarly to \crefpart{rdn-23}{rdn-2}, we get
	\begin{observation}\label[observation]{deltanminusone}
		If \(G\) is a graph with \(\abs{V(G)}\geq 2\) and \(\Delta(G) = \abs{V(G)}-1\), then \(\DRDN(G) = 2\).
	\end{observation}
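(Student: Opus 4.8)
The plan is to establish the two inequalities \(\DRDN(G) \geq 2\) and \(\DRDN(G) \leq 2\) separately, both of which turn out to be short. For the lower bound I would simply observe that, since \(\abs{V(G)} \geq 2\), any two-element subset \(A \subseteq V(G)\) is automatically a \DRdset{}: a set of size two can never place more than two of its members into any single closed \neighbour hood, so the defining constraint \(\abs{\Nc{v} \cap A} \leq 2\) holds trivially for every \(v\). Hence \(\DRDN(G) \geq 2\), and note that this part uses only \(\abs{V(G)} \geq 2\) and not the degree hypothesis.

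For the upper bound I would use the degree condition to produce a universal vertex. The hypothesis \(\Delta(G) = \abs{V(G)} - 1\) means that some vertex \(v\) is adjacent to every other vertex, so \(\Nc{v} = V(G)\). Now suppose, for contradiction, that \(A\) is a \DRdset{} with \(\abs{A} \geq 3\). Applying the packing constraint at \(v\) gives \(\abs{\Nc{v} \cap A} = \abs{V(G) \cap A} = \abs{A} \geq 3 > 2\), contradicting that \(A\) is a \DRdset{}. Therefore every \DRdset{} has at most two elements, that is, \(\DRDN(G) \leq 2\).

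Combining the two bounds yields \(\DRDN(G) = 2\). I do not expect any genuine obstacle here: the argument is essentially the observation that a universal vertex sees the entire vertex set in its closed \neighbour hood, so its single constraint alone caps the packing at two. The only point worth flagging is the contrast with \cref{rdn-2}: there the characterisation is an equivalence and the degenerate case \(\bar{K}_2\) must be excluded, whereas our statement is one-directional and the hypothesis \(\Delta(G) = \abs{V(G)} - 1\) already rules out \(\bar{K}_2\) (which has maximum degree \(0\)), so no exceptional case arises.
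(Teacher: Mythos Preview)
Your proposal is correct and follows essentially the same approach as the paper: both arguments hinge on the observation that a vertex of degree \(\abs{V(G)}-1\) has closed \neighbour hood equal to \(V(G)\), forcing any \DRdset{} to have size at most two. Your version is simply more explicit, spelling out the lower bound \(\DRDN(G)\geq 2\) that the paper leaves implicit.
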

	\begin{proof}
		Let \(v\in V(G)\) with \(\deg(v) = \abs{V(G)}-1\).
		Then \(\Nc{v} = V(G)\) and can contain only two chosen vertices.\qed
	\end{proof}
	But in contrast to the Roman domination problem, there are graphs~\(G\) with \(\DRDN(G) = 2\) and \(\Delta(G) < \abs{V(G)}-1\), like the utility graph~\(K_{3,3}\), which satisfies \(\DRDN(K_{3,3}) = 2\) by \cref{tlp-special-graphs}.
	
	We also do not obtain an analogous result to \crefpart{rdn-23}{rdn-3}.  
	There are graphs~\(G\) with \(\Delta(G) = \abs{V(G)}-2\) and \(\DRDN(G) = 2 \neq 3\), for example \(K_{2,4}\) as well as graphs with \(\DRDN(G) = 3\) but \(\Delta(G) < \abs{V(G)}-2\), for example the \(C_5\), by \cref{tlp-special-graphs}.
	\section{Strong duality for trees}\label{strongduality}
	Now, we want to show that for trees we actually have equality between the \Rdn{} and the \DRdn.
	
	\begin{theorem}\label{strongdualitytrees}
		For trees~\(T\) we have \[\RDN(T) = \DRDN(T)\ .\]
	\end{theorem}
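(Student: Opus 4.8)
The plan is to combine the weak duality already established with an explicit construction. Since we know \(\RDN(T)\geq\DRDN(T)\), and since \(\abs{A}\leq\DRDN(T)\) for every \DRdset~\(A\), it suffices to exhibit a single \DRdset{} of size at least \(\RDN(T)\): the chain \(\RDN(T)\leq\abs{A}\leq\DRDN(T)\leq\RDN(T)\) then collapses to equality. Thus the whole task reduces to turning a minimum-weight \RDF{} into an equally large \DRdset.

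To build this set, I would start from a \(\RDN\)-function \(f\) of \(T\) and may assume it satisfies all the structural properties collected in \cref{RDFproperties}. Writing \(V_i=\Set{v: f(v)=i}\), the weight is \(\abs{V_1}+2\abs{V_2}=\RDN(T)\), so I want a \DRdset{} of exactly this size. The natural candidate is
\[
  A \coloneqq V_1 \cup \bigcup_{v\in V_2} P_v,
\]
where, for each \(v\in V_2\), \(P_v\) is a set of two \pn s of~\(v\) with respect to~\(V_2\); these exist since every vertex of \(V_2\) has at least two \pn s (\cref{RDFproperties}). The first step is to check \(\abs{A}=\abs{V_1}+2\abs{V_2}\): the \pn s of distinct vertices of \(V_2\) are disjoint by definition, and no \pn{} of a \(V_2\)-vertex lies in \(V_1\), because a \pn{} is either the \(V_2\)-vertex itself or one of its \neighbour s and there are no edges between \(V_1\) and \(V_2\) (\cref{RDFproperties}). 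Hence the union is disjoint and has the desired cardinality.

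The main work, and the real obstacle, is verifying that \(A\) is a \DRdset, \ie \(\abs{\Nc{w}\cap A}\leq 2\) for every vertex~\(w\). Several cases are immediate from \cref{RDFproperties}: a vertex in \(V_2\) sees only its own two \pn s in its closed \neighbour hood (a \pn{} of another \(V_2\)-vertex cannot lie in \(\Nc{w}\) by definition), and the vertices of \(V_1\) alone never overload a closed \neighbour hood because \(G[V_1]\) has maximum degree one and every vertex meets at most two vertices of \(V_1\). The dangerous configuration is a vertex \(w\) adjacent both to a \pn{} of some \(V_2\)-vertex and to further selected vertices; on a general graph this genuinely fails, as the \(C_4\) (with \(\RDN=3>2=\DRDN\)) already shows. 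This is exactly where acyclicity must enter: on a tree the potential conflicts around a vertex form paths rather than the closed four-cycle pattern that makes the packing bound tight, and one can choose the two \pn s of each \(V_2\)-vertex --- preferring external \pn s in \(V_0\) and using the structural fact that an isolated-in-\(G[V_2]\) vertex with a unique external \pn~\(w\) satisfies \(\N{w}\cap V_1=\emptyset\) (\cref{RDFproperties}) --- so that no closed \neighbour hood receives a third selected vertex. I expect this case analysis, driven entirely by the properties in \cref{RDFproperties} together with the tree structure, to be the technical heart of the argument.

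Finally, once a valid \DRdset{} of size \(\RDN(T)\) is produced, the sandwich above yields \(\RDN(T)=\DRDN(T)\); combined with the LP weak-duality chain from \cref{sec:initial-properties}, this also shows that both linear relaxations attain integral optima on trees, as announced in the introduction.
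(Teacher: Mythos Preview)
Your high-level strategy is exactly the paper's: use weak duality for one inequality and, for the other, convert a \(\RDN\)-function into a \DRdset{} of the same size by taking \(V_1\) together with two \pn s for every vertex of \(V_2\). The disjointness count you give is correct.

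The genuine gap is in the verification step, which you defer with ``I expect this case analysis \ldots{} to be the technical heart''. The properties in \cref{RDFproperties} alone, together with your heuristic ``prefer external \pn s in \(V_0\)'', are not enough to guarantee a valid \DRdset{}. On the path \(P_8=1\text{--}2\text{--}\cdots\text{--}8\) with the \(\RDN\)-function \(f(2)=f(5)=f(8)=2\) (and \(0\) elsewhere), your rule selects \(\{1,3\}\), \(\{4,6\}\), and \(\{7,8\}\), giving \(A=\{1,3,4,6,7,8\}\); but \(\Nc{7}=\{6,7,8\}\subseteq A\), so the packing constraint fails. A valid choice (\(\{1,2,4,5,7,8\}\)) does exist here, but nothing in your proposal explains how to find it in general, and \cref{RDFproperties} gives you no control over how \pn s of different \(V_2\)-vertices interact across a \(V_0\)-vertex.

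The paper supplies precisely this missing control via an auxiliary lemma (\cref{fancyRDFtrees}): after rooting \(T\), one can \emph{transform} the \(\RDN\)-function so that every \(v\in V_2\) has two \pn s inside the subtree \(\Tv\), and every \(v\in V_0\) has at most one ``dangerous'' child (one that is in \(V_1\), or in \(V_2\) with a single external \pn). The \pn s are then always chosen \emph{downward}, and these rooted properties---not the unrooted ones of \cref{RDFproperties}---are what make the closed-\neighbour hood check go through. Establishing this lemma requires its own multi-case argument (relabelling neighbouring \(1\)'s, pushing a \(2\) up to its parent, merging two marked children into a single \(2\), etc.), and this is the technical core you are missing. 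Your outline would become a proof once you either prove an analogue of \cref{fancyRDFtrees} or give a concrete, provably correct rule for selecting the \(P_v\); as written, the hard part is asserted rather than done.
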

	
	To prove this, in the following we assume~\(T\) to be rooted at some vertex~\(r\).
	The idea of the proof is that given a minimum \RDF{}, we construct a \DRdset{} of the same size as the weight of the \RDF{}.
	We start with the following \lcnamecref{fancyRDFtrees}.
	
	\begin{lemma}\label[lemma]{fancyRDFtrees}
		Let~\(T\) be a rooted tree.
		There exists a \(\RDN\)-function~\(f\) such that, for all \(v\in T\),
		\begin{enumerate}[noitemsep,label={(\arabic*)}]
			\item\label{fewones} \(\Nc{v}\cap \subtreerooted{T}{v}\) contains at most one vertex \labell ed~\(1\),
			\item\label{twopnsbelow} if \(f(v) = 2\), \(\Nc{v}\cap \subtreerooted{T}{v}\) contains at least two \pn s of~\(v\), and
			\item\label{notwomarkedbelowzero} if \(f(v) = 0\), \(\Nc{v}\cap \subtreerooted{T}{v}\) contains at most one vertex~\(u\) such that
			\begin{enumerate}
				\item \(f(u) = 1\) or
				\item \(f(u) = 2\) and~\(u\) has exactly one external \pn{}.
			\end{enumerate}
		\end{enumerate}
	\end{lemma}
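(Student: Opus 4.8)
The plan is to begin with an arbitrary \(\RDN\)-function and repair it by weight-preserving local relabellings until \ref{fewones}--\ref{notwomarkedbelowzero} all hold, organising the repairs through an extremal (monovariant) argument. The main leverage is that \emph{every} \(\RDN\)-function satisfies all parts of \cref{RDFproperties}; since each relabelling I use preserves both the weight and the Roman-domination property, every intermediate function is again a \(\RDN\)-function and hence automatically inherits (a)--(e). In particular, if some prospective move were to create an edge between \(V_1\) and \(V_2\) or eliminate a \pn, that move would in fact strictly lower the weight, contradicting minimality; this discards many would-be awkward cases at no cost.

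Concretely, I would take \(f\) to be a \(\RDN\)-function that is lexicographically extremal for \(\Phi(f)=(|V_1|,\ \sum_{u\in V_2}\operatorname{depth}(u))\), first minimising the number of ones and then the total depth of the twos, and claim that such an \(f\) meets all three conditions. For \ref{fewones}: by (a) a vertex of value~\(1\) has at most one \neighbour{} of value~\(1\) and by (b) a vertex of value~\(2\) has none, so if \(\Nc{v}\cap\Tv\) contains two ones then either \(f(v)=1\) and exactly one child is a one, or \(f(v)=0\) and (at least) two children are ones. In both cases, reassigning \(v\mapsto 2\) and setting the value-\(1\) children of \(v\) to \(0\) yields an \(\RDN\)-function of the same weight with strictly fewer ones --- the affected children are now dominated by \(v\), and their subtrees are untouched --- contradicting the minimality of \(|V_1|\).

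For \ref{twopnsbelow}, suppose \(f(v)=2\) yet \(\Nc{v}\cap\Tv\) contains at most one \pn{} of~\(v\). By (d) the vertex \(v\) has at least two \pn s, and (unless \(v\) is the root, where all \neighbour s lie in \(\Tv\) and we are done) its only \neighbour{} outside \(\Tv\) is its parent~\(p\); hence \(p\) must be a \pn, which forces \(f(p)=0\) with \(v\) the unique dominator of~\(p\) (a value~\(1\) at~\(p\) would contradict~(b)). I would slide the two upward, setting \(f(p)\mapsto 2\) and \(f(v)\mapsto 0\): this keeps the weight and \(|V_1|\) fixed while strictly decreasing \(\sum_{u\in V_2}\operatorname{depth}(u)\). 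If the second \pn{} is \(v\) itself, the slide is clean, as the only vertex besides \(v\) relying solely on \(v\) is \(p\), now dominated by the new two. The delicate case is when the second \pn{} is instead a value-\(0\) child \(c\) privately dominated by \(v\): the slide would orphan \(c\), and producing a \(\Phi\)-improving repair here is the crux.

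For \ref{notwomarkedbelowzero} I would argue at a value-\(0\) vertex \(v\) with two \enquote{problematic} children --- each of value~\(1\), or of value~\(2\) with a single external \pn{} --- and exhibit a relabelling (pulling a two up onto~\(v\), or merging two such children) that strictly improves \(\Phi\); note that \ref{fewones} already excludes two value-\(1\) children. The main obstacle throughout is the coordination of the three repairs: enforcing \ref{fewones} trades ones for twos, whereas the repairs natural for \ref{twopnsbelow} and \ref{notwomarkedbelowzero} tend to push the other way, so a naive greedy fixing can cycle. This is exactly why \(\Phi\) is chosen lexicographically, so that every repair can be arranged to decrease it in the fixed order, and why the awkward cases --- above all a value-\(2\) vertex whose unique subtree \pn{} is a privately dominated \(0\)-child --- must be dispatched by dedicated relabellings rather than a generic slide. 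Once each violation is shown to admit such a strictly \(\Phi\)-decreasing, weight-preserving move, extremality of \(f\) forces \ref{fewones}--\ref{notwomarkedbelowzero} simultaneously.
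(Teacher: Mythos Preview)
Your extremal potential \(\Phi(f)=(|V_1|,\ \sum_{u\in V_2}\operatorname{depth}(u))\) cannot work, and the case you yourself flag as \enquote{the crux} is exactly where it breaks. Take the tree rooted at~\(r\) with edges \(r\text{--}v\), \(v\text{--}u\), \(v\text{--}x\), \(x\text{--}a\), \(x\text{--}b\) (so \(u,a,b\) are leaves). Here \(\RDN=4\), and the \emph{unique} \(\RDN\)-function with \(|V_1|=0\) assigns \(2\) to~\(v\) and~\(x\) and \(0\) elsewhere. For this~\(f\) the \pn s of~\(v\) with respect to \(V_2=\{v,x\}\) are \(r\) and~\(u\); since \(r\notin\Tv\), the set \(\Nc{v}\cap\Tv\) contains only the single \pn~\(u\), violating~\ref{twopnsbelow}. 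Thus every \(\Phi\)-minimiser fails the lemma, and any relabelling that repairs this instance must \emph{increase} \(|V_1|\): the only \(\RDN\)-functions satisfying \ref{fewones}--\ref{notwomarkedbelowzero} here have \(|V_1|=2\). Your lexicographic first coordinate therefore forbids precisely the move that is required, so no \(\Phi\)-decreasing repair exists in the delicate case and the argument cannot close.

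The paper does not use a global monovariant. It processes vertices bottom-up and, at each~\(v\), applies an ad hoc weight-preserving relabelling tailored to the violation, verifying directly that the three properties then hold throughout~\(\Tv\). In the situation above (\(f(v)=2\), exactly one \pn~\(u\) among the children, \(v\) not a \pn{} of itself, parent~\(w\) a \pn) the paper sets \(f'(v)\coloneqq 0\) and \(f'(u)\coloneqq f'(w)\coloneqq 1\), deliberately creating two new ones; the bottom-up order together with the case analysis is what prevents cycling, not a decreasing potential. If you want to rescue an extremal argument you would need a genuinely different invariant---one that does not penalise trading a deep two for a pair of ones---but the pair \((|V_1|,\ \text{depth sum})\) is provably incompatible with the conclusion.
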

	
	Before we prove this statement, we apply it to prove \cref{strongdualitytrees}.
	Using a \RDF{} with the properties in \cref{fancyRDFtrees}, we construct a \DRdset{}.
	Let \(G\) be a graph and a \RDF~\(f\) for~\(G\) as in \cref{fancyRDFtrees} be given.
	Let \(\DRDSet\) be the set of all vertices with label~\(1\) and add, for each vertex~\(v\) with label~\(2\), two of the \pn s of \(v\) in \(Tv\), preferring child vertices.
	We want to show that \(\DRDSet\) is a \DRdset{}.
	For \(v\in T\), we regard \(\Nc{v}\).
	\begin{enumerate}[wide,label=\emph{Case~\arabic*:}]
		\item \(f(v) = 1\).
		In the closed \neighbour hood \(\Nc{v}\) of~\(v\) there are only vertices with label~\(0\) by \crefpart{RDFproperties}{noonetwo} and \crefpart{fancyRDFtrees}{fewones}.
		None of the children of~\(v\) is in~\(\DRDSet\) as they have no parent with label~\(2\). Thus \(\abs{\Nc{v}\cap \DRDSet}\leq 2\).
		\item \(f(v) = 0\).
		Again, children with label~\(0\) are not in~\(\DRDSet\), as they do not have a parent with label~\(2\).
		Together with \crefpart{fancyRDFtrees}{notwomarkedbelowzero}, there is at most one child of~\(v\) in~\(\DRDSet\).
		If \(v\notin\DRDSet\), we have \(\abs{\Nc{v}\cap \DRDSet}\leq 2\). So, assume \(v\in\DRDSet\).
		This means~\(v\) is a \pn{} of its parent, call it~\(w\) and \(f(w) = 2\).
		It follows that there is no child of~\(v\) with label~\(2\).
		The only problem would be if~\(w\in\DRDSet\) (thus \(w\) is a \pn{} of itself) and there was a child of~\(v\) that has label~\(1\).
		But this cannot happen by \crefpart{RDFproperties}{nooneforprivatezero}.
		So, we again have \(\abs{\Nc{v}\cap \DRDSet}\leq 2\).
		\item \(f(v) = 2\).
		By construction \(\abs{\Nc{v}\cap \Tv\cap \DRDSet}\leq 2\).
		We claim that the parent~\(w\) of~\(v\) is not in~\(\DRDSet\).
		By \crefpart{RDFproperties}{noonetwo} \(f(w)\neq 1\).
		If \(f(w)\) was~\(2\), it would not be a \pn{} of itself and thus, is not in~\(\DRDSet\).
		If \(f(w)\) was~\(0\), the parent of~\(w\) would have label~\(2\) and thus, \(w\) is not a \pn{} of any vertex with label~\(2\).
		Again it follows that~\(w\) is not in~\(\DRDSet\).
		In total, we have \(\abs{\Nc{v}\cap \DRDSet}\leq 2\).
	\end{enumerate}
	
	With this we can now prove our main result \cref{strongdualitytrees}.
	
	\begin{proof}[of \cref{strongdualitytrees}]
		Let~\(T\) be a tree rooted at some vertex.
		Given a \(\RDN\)-function with the properties as in \cref{fancyRDFtrees}, we construct a set with the aforementioned procedure.
		By construction, this set is a \DRdset{} of weight~\(\RDN\) as we choose one vertex for each~\(1\) and two vertices for each~\(2\).
		This means \(\DRDN(T) \geq \RDN(T)\).
		Together with weak duality \(\RDN(T) \geq \DRDN(T)\) we get \[\RDN(T) = \DRDN(T)\ .\]\qed
	\end{proof}
	
	\begin{corollary}
		The linear programs of the \DRdproblem{} and the RDP have always an integral solution value for trees.
	\end{corollary}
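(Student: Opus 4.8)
The plan is to assemble a short chain of inequalities tying the two integer optima to their common LP value, and then collapse the chain using \cref{strongdualitytrees}. Write $z$ for the shared optimal value of the LP-relaxation of the RDP and its dual~\cref{RDdual}; these two values agree by LP strong duality, which applies here because the relaxed primal is feasible (take $x_v=1$ and $y_v=0$ for all~$v$, which satisfies every constraint $x_v+\sum_{u\in\Nc{v}}y_u\geq 1$) and its objective is bounded below by~$0$.

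First I would establish the sandwich $\DRDN(T)\leq z\leq\RDN(T)$. The right inequality holds because the LP-relaxation of the minimization IP for the RDP is obtained simply by dropping the integrality constraints, so its optimum cannot exceed the integer optimum $\RDN(T)$. The left inequality holds because every integer feasible point of the dual, that is, every \DRdset{}, is in particular feasible for the dual LP~\cref{RDdual}; hence the integer maximum $\DRDN(T)$ is at most the LP maximum~$z$. Next I would invoke \cref{strongdualitytrees}, which gives $\RDN(T)=\DRDN(T)$ on trees. Substituting this into the sandwich forces every term to coincide, so $\DRDN(T)=z=\RDN(T)$. Since $\RDN(T)$ and $\DRDN(T)$ are integers, the common LP value $z$ is integral as well, and this same $z$ is simultaneously the optimal value of the RDP LP-relaxation and of the \DRdshort{} LP~\cref{RDdual}. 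Thus both linear programs attain an integral optimal objective value on trees, which is exactly the claim.

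There is essentially no difficult step here, since all of the substance is carried by \cref{strongdualitytrees}; the corollary is just the observation that strong duality between the two \emph{integer} programs squeezes the intermediate LP value to an integer. The only point requiring care is bookkeeping of the inequality directions: the RDP is a minimization whose relaxation \emph{lower}-bounds its IP, whereas the \DRdshort{} is a maximization whose relaxation \emph{upper}-bounds its IP, and one must confirm that LP strong duality is applicable (feasibility and boundedness of the relaxed primal, noted above) so that the two LP optima really do share the value~$z$.
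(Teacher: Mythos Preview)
Your proposal is correct and follows essentially the same sandwich argument as the paper: both squeeze the LP optima between \(\DRDN(T)\) and \(\RDN(T)\) and then invoke \cref{strongdualitytrees} to collapse the chain. The only cosmetic difference is that the paper keeps the two LP values separate and links them by weak LP duality (\(z_{\RDP}^{\LP}\geq z_{\DRDP}^{\LP}\)), whereas you identify them via strong LP duality; either suffices once the endpoints are forced to coincide.
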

	\begin{proof}
		Let \(z_{\RDP}^{\IP}\), \(z_{\RDP}^{\LP}\), \(z_{\DRDP}^{\LP}\), \(z_{\DRDP}^{\IP}\) be the optimal solution values for the integer and the linear program for the RDP, and the linear and integer program of the \DRdproblem{}, respectively.
		Then we have \(z_{\RDP}^{\IP} \geq z_{\RDP}^{\LP} \geq z_{\DRDP}^{\LP} \geq z_{\DRDP}^{\IP}\).
		By \cref{strongdualitytrees}, we have \(z_{\RDP}^{\IP} = z_{\DRDP}^{\IP}\) and thus equality instead of all the inequalities before, implying the statement.\qed
	\end{proof}
	
	\begin{proof}[of \cref{fancyRDFtrees}]
		First, take any \(\RDN\)-function~\(f\) of the rooted tree.
		Starting at the leaves we go through the vertices in a bottom-up fashion.
		For every vertex~\(v\) we check whether the properties hold in \(\subtreerooted{T}{v}\) and, if one of the properties is violated, we transform the \RDF{} \(f\) into a \RDF{} \(f'\) and prove that after the transformation all properties are met in \(Tv\).
		For notational convenience, we define \(V_i\coloneqq\Set{v\in V(G) : f(v) = i}\) and \(V_i'\coloneqq\Set{v\in V(G) : f'(v) = i}\) for \(i\in\Set{0,1,2}\).
		
		Now, let \(v\in T\) such that the properties hold for all other vertices in \(\subtreerooted{T}{v}\).
		\begin{enumerate}[wide,label=\emph{Case~\arabic*:}]
			\item \(f(v)=1\).
			Assume~\(v\) has a child~\(u\) with \(f(u) = 1\), as otherwise we are done.
			Let \(f'\) be the function with \(f'(v)\coloneqq 2\) and \(f'(u)\coloneqq 0\) (and all other values are identical to \(f\)), see \cref{neighbouringones}.
			The weight stays the same and all vertices in \(V_0'\) are adjacent to one in \(V_2'\), so~\(f'\) is also a \(\RDN\)-function.
			Now,~\(u\) and~\(v\) do not have any children in \(V_1'\) since they had none in \(V_1\) by \crefpart{RDFproperties}{deltaofones}.
			Since~\(u\) and~\(v\) had no \neighbour s in~\(V_2\) by \crefpart{RDFproperties}{noonetwo}, they are \pn s of \(v\) with respect to \(V_2'\) and, thus, \(f'\) satisfies all properties in~\(\subtreerooted{T}{u}\) and~\(\Tv\).
			\begin{figure}[htb]
				\centering
				\begin{minipage}{0.4\textwidth}
					\ \hfill
					\begin{tikzpicture}[
						every node/.style={draw,circle,minimum size=0.45cm}
						]
						\node[label=left:\(v\)] (1) at (0,0) {1};
						
						\node[label=left:\(u\)] (2) at (-0.5,-1) {1};
						
						\draw (2) -- (1);
					\end{tikzpicture}
				\end{minipage}
				\begin{minipage}{0.1\textwidth}
					\centering
					\(\longrightarrow\)
				\end{minipage}
				\begin{minipage}{0.4\textwidth}
					\begin{tikzpicture}[
						every node/.style={draw,circle,minimum size=0.45cm}
						]
						\node[label=left:\(v\)] (1) at (0,0) {2};
						
						\node[label=left:\(u\)] (2) at (-0.5,-1) {0};
						
						\draw (2) -- (1);
					\end{tikzpicture}
				\end{minipage}
				\caption{Transformation if there are two \neighbour ing vertices \labell ed \(1\)}\label{neighbouringones}
			\end{figure}
			
			\item \(f(v) = 2\).
			By \crefpart{RDFproperties}{noonetwo}, \(v\) has no children in~\(V_1\) and, by \crefpart{RDFproperties}{pnsoftwos}, \(v\) has at least two \pn s with respect to \(V_2\).
			If at least two of these are in \(Tv\), we are done.
			So assume at most one of these \pn s is in \(Tv\), then the parent of~\(v\), call it~\(w\), is one of its \pn s.
			
			If there is no \pn{} among the children of \(v\), then \(v\) is a \pn{} itself.
			Hence, no child of \(v\) is in~\(V_2\), so all children of \(v\) are in~\(V_0\).
			Additionally, since none of these children are \pn s of \(v\), they all have a child in \(V_2\).
			Now, we can swap the labels of \(v\) and \(w\) to obtain \(f'\), that is, \(f'(v)\coloneqq 0\) and \(f'(w)\coloneqq 2\), see \cref{nopnchild}.
			The function \(f'\) is also a \(\RDN\)-function and the vertex~\(v\) has no children in~\(V_1'\) or \(V_2'\), so the properties hold for~\(\Tv\).
			\begin{figure}[htb]
				\centering
				\begin{minipage}{0.4\textwidth}
					\ \hfill
					\begin{tikzpicture}[
						every node/.style={draw,circle,minimum size=0.45cm}
						]
						\node[label=left:\(w\)] (0) at (0.5,1) {0};
						
						\node[label=left:\(v\)] (1) at (0,0) {2};
						
						\node (2) at (-1,-1) {0};
						\node[draw=none] (dots) at (-0.5,-1) {\dots};
						\node (3) at (0,-1) {0};
						\node (4) at (1,-1) {2};
						
						\node (5) at (-1,-2) {2};
						\node (6) at (0,-2) {2};
						
						\draw (4) to node[midway,draw=none] {\rotatebox{-45}{\textsf{X}}} (1) -- (0);
						\draw (6) -- (3) -- (1) -- (2) -- (5);
					\end{tikzpicture}
				\end{minipage}
				\begin{minipage}{0.1\textwidth}
					\centering
					\(\longrightarrow\)
				\end{minipage}
				\begin{minipage}{0.4\textwidth}
					\begin{tikzpicture}[
						every node/.style={draw,circle,minimum size=0.45cm}
						]
						\node[label=left:\(w\)] (0) at (0.5,1) {2};
						
						\node[label=left:\(v\)] (1) at (0,0) {0};
						
						\node (2) at (-1,-1) {0};
						\node[draw=none] (dots) at (-0.5,-1) {\dots};
						\node (3) at (0,-1) {0};
						\node (4) at (1,-1) {2};
						
						\node (5) at (-1,-2) {2};
						\node (6) at (0,-2) {2};
						
						\draw (4) to node[midway,draw=none] {\rotatebox{-45}{\textsf{X}}} (1) -- (0);
						\draw (6) -- (3) -- (1) -- (2) -- (5);
					\end{tikzpicture}
				\end{minipage}
				\caption{Transformation if there are no children of \(v\) that are \pn s of \(v\)}\label{nopnchild}
			\end{figure}
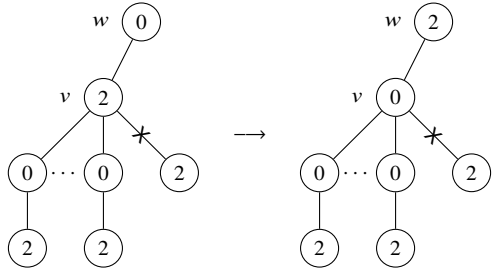
			
			Now, assume there is a \pn{} among the children of~\(v\), call it~\(u\).
			Then~\(u\) and~\(w\) are the only two \pn s of~\(v\) and~\(v\) itself is not a \pn{}, meaning that \(v\) has a child in \(V_2\).
			Also every other child of~\(v\) that is in \(V_0\) has a child in~\(V_2\) since it is not a \pn{} of~\(v\).
			We obtain a new function \(f'\) by setting \(f'(v)\coloneqq 0\) and \(f'(u)\coloneqq f'(w)\coloneqq 1\), see \cref{onepnchild}.
			The weight of \(f'\) and \(f\) coincide and the \neighbour s of~\(v\) in \(V_0'\) are still dominated by the reasons before and so is~\(v\).
			Hence, \(f'\) is a \(\RDN\)-function.
			Since \(f(v) = 2\), no children of \(v\) are in \(V_1\) and, thus, only one is in~\(V_1'\).
			All children of~\(v\) that are in~\(V_2\) have at least two external \pn s with respect to \(V_2'\) as they were not \pn s of themselves with respect to \(V_2\).
			Thus, the properties hold for~\(v\).
			It is, however, possible that~\(u\) has a child in~\(V_1'\).
			In this case we use the transformation described above for the case where \(f(v) = 1\) after which the properties hold for the entire~\(\Tv\).
			\begin{figure}[htb]
				\centering
				\begin{minipage}{0.4\textwidth}
					\ \hfill
					\begin{tikzpicture}[
						every node/.style={draw,circle,minimum size=0.45cm}
						]
						
						\node[label=left:\(w\)] (0) at (0.5,1) {0};
						
						\node[label=left:\(v\)] (1) at (0,0) {2};
						
						\node[label=left:\(u\)] (2) at (-1.5,-1) {0};
						\node (3) at (-0.5,-1) {0};
						\node[draw=none] (dots) at (0,-1) {\dots};
						\node (4) at (0.5,-1) {0};
						\node (5) at (1.5,-1) {2};
						
						\node (6) at (-1.5,-2) {2};
						\node (7) at (-0.5,-2) {2};
						\node (8) at (0.5,-2) {2};
						
						\draw (6) to node[midway,draw=none] {\rotatebox{-90}{\textsf{X}}} (2) -- (1) -- (3) -- (7);
						\draw (8) -- (4) -- (1) -- (0);
						\draw (5) -- (1);
					\end{tikzpicture}
				\end{minipage}
				\begin{minipage}{0.1\textwidth}
					\centering
					\(\longrightarrow\)
				\end{minipage}
				\begin{minipage}{0.4\textwidth}
					\begin{tikzpicture}[
						every node/.style={draw,circle,minimum size=0.45cm}
						]
						\node[label=left:\(w\)] (0) at (0.5,1) {1};
						
						\node[label=left:\(v\)] (1) at (0,0) {0};
						
						\node[label=left:\(u\)] (2) at (-1.5,-1) {1};
						\node (3) at (-0.5,-1) {0};
						\node[draw=none] (dots) at (0,-1) {\dots};
						\node (4) at (0.5,-1) {0};
						\node (5) at (1.5,-1) {2};
						
						\node (6) at (-1.5,-2) {2};
						\node (7) at (-0.5,-2) {2};
						\node (8) at (0.5,-2) {2};
						
						\draw (6) to node[midway,draw=none] {\rotatebox{-90}{\textsf{X}}} (2) -- (1) -- (3) -- (7);
						\draw (8) -- (4) -- (1) -- (0);
						\draw (5) -- (1);
					\end{tikzpicture}
				\end{minipage}
				\caption{Transformation if there is exactly one child of \(v\) that is a \pn{} of \(v\) and \(v\) is not a \pn{} of itself}\label{onepnchild}
			\end{figure}
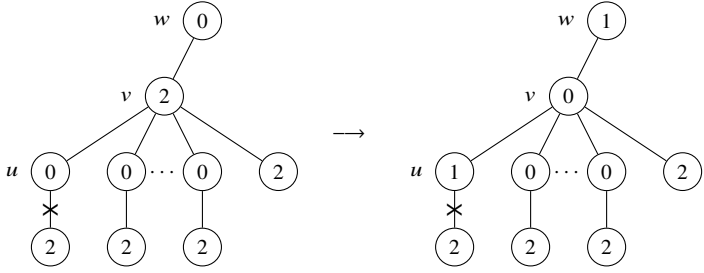
			\item \(f(v) = 0\).
			We look at different cases depending on which property is violated and how. 
			\begin{enumerate}[wide,label=\emph{Subcase~\arabic*:},labelindent=2\parindent] 
				\item Assume~\(v\) has exactly two \neighbour s \(v_1,\, v_2\) in~\(\Tv\) that are in~\(V_2\) and both have exactly one external \pn{} \(u_1,\, u_2\) with respect to \(V_2\).
				We transform the \RDF{} according to \cref{twomarkedtwoschildren}, \ie we set \(f'(v)\coloneqq 2\), \(f'(v_1)\coloneqq f'(v_2) \coloneqq 0\), and \(f'(u_1)\coloneqq f'(u_2)\coloneqq 1\).
				This has no effect on the weight and all vertices in \(V_0'\) have a neighbour in \(V_2'\).
				Hence, \(f'\) is a \(\RDN\)-function.
				
				None of the children of \(u_1\) or \(u_2\) are in~\(V_1\) by \crefpart{RDFproperties}{nooneforprivatezero}, and therefore none are in \(V_1'\).
				None of the children of \(v_1\) and \(v_2\) are in \(V_1\), by \crefpart{RDFproperties}{noonetwo}, nor are they in~\(V_2\) since they were \pn s of themselves.
				This makes them \pn s of \(v\) with respect to \(V_2'\).
				Lastly, none of the children of \(v\) are in~\(V_1'\) because \(f'\) is a \(\RDN\)-function and thus \crefpart{RDFproperties}{noonetwo} holds.
				So, the properties hold for the entire~\(\Tv\).
				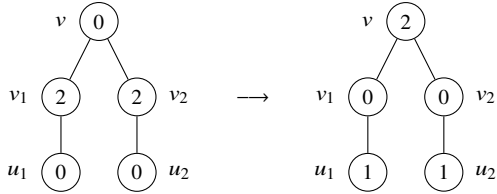
\begin{figure}[htb]
					\centering
					\begin{minipage}{0.4\textwidth}
						\ \hfill
						\begin{tikzpicture}[
							every node/.style={draw,circle,minimum size=0.45cm}
							]
							\node[label=left:\(v\)] (1) at (0,0) {0};
							
							\node[label=left:\(v_1\)] (2) at (-0.5,-1) {2};
							\node[label=right:\(v_2\)] (3) at (0.5,-1) {2};
							
							\node[label=left:\(u_1\)] (4) at (-0.5,-2) {0};
							\node[label=right:\(u_2\)] (5) at (0.5,-2) {0};
							
							\draw (4) -- (2) -- (1) -- (3) -- (5);
						\end{tikzpicture}
					\end{minipage}
					\begin{minipage}{0.1\textwidth}
						\centering
						\(\longrightarrow\)
					\end{minipage}
					\begin{minipage}{0.4\textwidth}
						\begin{tikzpicture}[
							every node/.style={draw,circle,minimum size=0.45cm}
							]
							\node[label=left:\(v\)] (1) at (0,0) {2};
							
							\node[label=left:\(v_1\)] (2) at (-0.5,-1) {0};
							\node[label=right:\(v_2\)] (3) at (0.5,-1) {0};
							
							\node[label=left:\(u_1\)] (4) at (-0.5,-2) {1};
							\node[label=right:\(u_2\)] (5) at (0.5,-2) {1};
							
							\draw (4) -- (2) -- (1) -- (3) -- (5);
						\end{tikzpicture}
					\end{minipage}
					\caption{\(f(v) = 0\) and there are exactly two children with value~\(2\) that have exactly one external \pn{}}\label{twomarkedtwoschildren}
				\end{figure}
				
				The same transformation shows that~\(v\) cannot have more than two such children since this would result in a \RDF{} with smaller weight contradicting the assumptions.
				\item Assume~\(v\) has a child \(v_1\) in~\(V_1\) and a child \(v_2\) in~\(V_2\) with exactly one external \pn{} \(u_2\).
				We transform the \RDF{} according to \cref{oneandmarkedtwochildren}, \ie we set \(f'(v)\coloneqq 2\), \(f'(v_1)\coloneqq f'(v_2)\coloneqq 0\), and \(f'(u_2)\coloneqq 1\).
				The weight of \(f\) and \(f'\) coincide and all vertices in~\(V_0'\) are adjacent to one in \(V_2'\), so we still have a \(\RDN\)-function.
				
				By the same reasons as before the properties hold for~\(v\), \(v_2\), and \(u_2\).
				For \(v_1\) the properties also hold since it has no children in~\(V_1\) or~\(V_2\) below it by \crefpart{RDFproperties}{noonetwo} and the properties for subgraphs of~\(\Tv\).
				\begin{figure}[htb]
					\centering
					\begin{minipage}{0.4\textwidth}
						\ \hfill
						\begin{tikzpicture}[
							every node/.style={draw,circle,minimum size=0.45cm}
							]
							\node[label=left:\(v\)] (1) at (0,0) {0};
							
							\node[label=left:\(v_1\)] (2) at (-0.5,-1) {1};
							\node[label=right:\(v_2\)] (3) at (0.5,-1) {2};
							
							\node[label=right:\(u_2\)] (4) at (0.5,-2) {0};
							
							\draw (2) -- (1) -- (3) -- (4);
						\end{tikzpicture}
					\end{minipage}
					\begin{minipage}{0.1\textwidth}
						\centering
						\(\longrightarrow\)
					\end{minipage}
					\begin{minipage}{0.4\textwidth}
						\begin{tikzpicture}[
							every node/.style={draw,circle,minimum size=0.45cm}
							]
							\node[label=left:\(v\)] (1) at (0,0) {2};
							
							\node[label=left:\(v_1\)] (2) at (-0.5,-1) {0};
							\node[label=right:\(v_2\)] (3) at (0.5,-1) {0};
							
							\node[label=right:\(u_2\)] (4) at (0.5,-2) {1};
							
							\draw (2) -- (1) -- (3) -- (4);
							
							\draw (2) -- (1) -- (3);
						\end{tikzpicture}
					\end{minipage}
					\caption{\(f(v) = 0\) and there is a child with value~\(1\) and one with value~\(2\) that has exactly one external \pn{}}\label{oneandmarkedtwochildren}
				\end{figure}
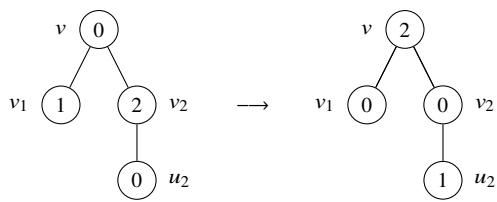
				\item Assume~\(v\) has two children \(v_1,\, v_2\) in~\(V_1\).
				We set \(f'(v)\coloneqq 2\) and \(f'(v_1)\coloneqq f'(v_2)\coloneqq 0\), see \cref{twoonechildren}.
				The weight of \(f'\) and \(f\) are the same and all vertices in~\(V_0'\) are adjacent to one in \(V_2'\).
				Thus, \(f'\) is a \(\RDN\)-function.
				
				Note, that~\(v\) cannot have more than two children in~\(V_1\), by \crefpart{RDFproperties}{fewonesatzero}, so no child of \(v\) is in \(V_1'\).
				Also, the vertices \(v_1\) and \(v_2\) are \pn s of \(v\) with respect to \(V_2'\).
				This is true since none of their children are in~\(V_2\) by \crefpart{RDFproperties}{noonetwo}.
				Since none of the children of \(v_1\) and \(v_2\) are in \(V_1\) by the properties for subgraphs of~\(\Tv\), these properties again hold for the entire~\(\Tv\).
				\begin{figure}[htb]
					\centering
					\begin{minipage}{0.4\textwidth}
						\ \hfill
						\begin{tikzpicture}[
							every node/.style={draw,circle,minimum size=0.45cm}
							]
							\node[label=left:\(v\)] (1) at (0,0) {0};
							
							\node[label=left:\(v_1\)] (2) at (-0.5,-1) {1};
							\node[label=right:\(v_2\)] (3) at (0.5,-1) {1};
							
							\draw (2) -- (1) -- (3);
						\end{tikzpicture}
					\end{minipage}
					\begin{minipage}{0.1\textwidth}
						\centering
						\(\longrightarrow\)
					\end{minipage}
					\begin{minipage}{0.4\textwidth}
						\begin{tikzpicture}[
							every node/.style={draw,circle,minimum size=0.45cm}
							]
							\node[label=left:\(v\)] (1) at (0,0) {2};
							
							\node[label=left:\(v_1\)] (2) at (-0.5,-1) {0};
							\node[label=right:\(v_2\)] (3) at (0.5,-1) {0};
							
							\draw (2) -- (1) -- (3);
						\end{tikzpicture}
					\end{minipage}
					\caption{Transformation if there is a vertex with two children \labell ed \(1\)}\label{twoonechildren}
				\end{figure}
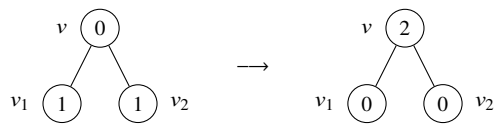
			\end{enumerate}
		\end{enumerate}
		
		Since we guaranteed that the three properties hold at all vertices in \(\subtreerooted{T}{v}\) and they trivially hold for leaves (which are not \labell ed~\(2\)), we can conclude inductively that the three properties hold for~\(T\).\qed
	\end{proof}
	
	\begin{remark}
		\Cref{strongdualitytrees} is not a characterization of graphs with \(\RDN = \DRDN\).
		There are other graphs than trees with \(\RDN = \DRDN\).
		For example all graphs with \(\Delta(G) = \abs{V(G)}-1\) as seen in \cref{deltanminusone}.
		Also see \cref{dualgapzero} for two examples without \(\Delta(G) = \abs{V(G)}-1\). 
		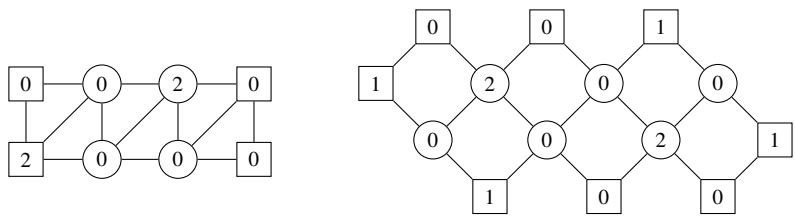
\begin{figure}[htb]
			\centering
			\begin{tikzpicture}[
				every node/.style={draw,circle,minimum size=0.45cm}
				]
				
				\node[chosen] (1) at (0,1) {0};
				\node	      (2) at (1,1) {0};
				\node         (3) at (2,1) {2};
				\node[chosen] (4) at (3,1) {0};
				
				\node[chosen]     (5) at (0,0) {2};
				\node             (6) at (1,0) {0};
				\node             (7) at (2,0) {0};
				\node[chosen] 	  (8) at (3,0) {0};
				
				\draw (1) -- (2) -- (3) -- (4);
				
				\draw (5) -- (6) -- (7) -- (8);
				
				\draw (2) -- (5);
				\draw (3) -- (6);
				\draw (4) -- (7);
				
				\draw (1) -- (5);
				\draw (2) -- (6);
				\draw (3) -- (7);
				\draw (4) -- (8);
				\node[draw=none] at (0,-0.5) {};
			\end{tikzpicture}
			\hspace{1cm}
			\begin{tikzpicture}[
				every node/.style={draw,circle,minimum size=0.45cm},
				scale=0.75
				]
				
				\node[chosen]  (1)  at (1,2) {0};
				\node[chosen]  (2)  at (3,2) {0};
				\node[chosen]  (3)  at (5,2) {1};
				
				\node[chosen]  (4)  at (0,1) {1};
				\node          (5)  at (2,1) {2};
				\node          (6)  at (4,1) {0};
				\node          (7)  at (6,1) {0};
				
				\node          (8)  at (1,0) {0};
				\node          (9)  at (3,0) {0};
				\node          (10) at (5,0) {2};
				\node[chosen]  (11) at (7,0) {1};
				
				\node[chosen]  (12) at (2,-1) {1};
				\node[chosen]  (13) at (4,-1) {0};
				\node[chosen]  (14) at (6,-1) {0};
				
				\draw (1) -- (4) -- (8) -- (12) -- (9) -- (13) -- (10) -- (14) -- (11) -- (7) -- (10) -- (6) -- (9) -- (5) -- (2) -- (6) --(3) -- (7);
				\draw (1)--(5)--(8);
			\end{tikzpicture}
			\caption{Example graphs with \(\RDN = \DRDN\). 
			The numbers in the vertices are the values of a Roman dominating function and the square vertices are those in the \DRdset.}\label{dualgapzero}
		\end{figure}
		For each graph there is a Roman dominating function and a \DRdset{} of the same weight given in the figure. 
		By duality, these weights are optimal and the left graph has \(\RDN = \DRDN = 4\) and the right one has \(\RDN = \DRDN = 8\).
	\end{remark}
	\section{Conclusion}
	We compared Roman domination and \DRdname{} and showed that the latter is weakly dual to the Roman domination problem. 
	Indeed, we could show strong duality on trees.
	
	In the literature there are many variants of the RDP.
	Like we did with the general problem one could \dualiz e these and see if these also generate new interesting problems. 
	Also a characterisation of graphs with \(\RDN = \DRDN\) would be of interest. 

	
	\paragraph{Funding} The work of Helena Weiß was partially funded by the Ministerium des Innern und für Sport, Rheinland-Pfalz within the OnePlan project.
	
	Oliver Bachtler was funded by the Deutsche Forschungsgemeinschaft (DFG, German Research Foundation) - GRK 2982, 516090167 \enquote{Mathematics of Interdisciplinary Multiobjective Optimization}
	
	\paragraph{Data availability} There is no data associated with this work.
	
	%
	\section*{Declarations}
	\paragraph{Conflict of interest} The authors declare that they have no conflict of interest.

	\bibliographystyle{plainnat}
	\bibliography{bibliography}   
	
	
\end{document}